\newtheorem{dfn}{Definition}[section]
\newtheorem{thm}[dfn]{Theorem}
\newtheorem{prop}[dfn]{Proposition}
\newtheorem{lem}[dfn]{Lemma}
\newtheorem{cor}[dfn]{Corollary}
\numberwithin{equation}{section}
\title[Lipschitz stability for an elliptic inverse problem]{Lipschitz stability for an elliptic inverse problem with two measurements}
\author{Mourad Choulli}
\address{Universit\'{e} de Lorraine, 34 cours L\'{e}opold, 54052 Nancy cedex, France}
\email{mourad.choulli@univ-lorraine.fr}
\author{Hiroshi Takase}
\address{Institute of Mathematics for Industry, Kyushu University, 744 Motooka, Nishi-ku, Fukuoka 819-0395, Japan}
\email{htakase@imi.kyushu-u.ac.jp}
\date{\today}
\keywords{elliptic equations, inverse problems, Carleman type inequality, quantitative uniqueness of continuation.}
\subjclass[2020]{35R30, 35J25, 58J05, 86A22}
\begin{document}
\begin{abstract}
We consider the problem of determining the unknown boundary values of a solution of an elliptic equation outside a bounded open set $B$ from the knowledge of the values of this solution on a boundary of an arbitrary Lipschitz bounded domain surrounding $B$. We obtain for this inverse problem Lipschitz stability for an admissible class of unknown boundary functions. Our analysis applies as well to an interior problem. We also give an extension to the parabolic case.
\end{abstract}

\maketitle

\section{Introduction}\label{section1}

Let $n\ge 2$ be an integer. Throughout this text, we use the Einstein summation convention for quantities with indices. If in any term appears twice, as both an upper and lower index, that term is assumed to be summed from $1$ to $n$.

Let $(g_{ij})\in W^{1,\infty}(\mathbb{R}^n;\mathbb{R}^{n\times n})$ be a symmetric matrix-valued function satisfying, for some  $\theta>0$ 
\[
g_{ij}(x)\xi^i\xi^j\ge \theta|\xi|^2\quad x,\xi \in\mathbb{R}^n.
\]
Note that $(g^{ij})$  the matrix inverse to $g$ is uniformly positive definite as well. Let $p\in L^\infty(\mathbb{R}^n)$ and recall that the Laplace-Beltrami operator associated to  the metric tensor $g=g_{ij}dx^i\otimes dx^j$ is given by
\[
\Delta_g u:=\frac{1}{\sqrt{|g|}}\partial_i\left(\sqrt{|g|}g^{ij}\partial_ju\right),
\]
where $|g|=\mbox{det} (g)$.

Let $B\subset\mathbb{R}^n$ be a $C^2$ bounded open set with boundary $\mathcal{S}:=\partial B$. Set $U:=\mathbb{R}^n\setminus\overline{B}$ and consider the exterior boundary value problem
\begin{equation}\label{exBVP}
\begin{cases}
Pu:=-\Delta_gu+pu=0\quad &\text{in}\; U,
\\
u=\mathfrak{a}\quad &\text{on}\; \mathcal{S},
\end{cases}
\end{equation}
where $\mathfrak{a}\in H^{3/2}(\mathcal{S})$. When $p\ge \eta$ for some $\eta>0$, reducing first \eqref{exBVP} to a boundary value problem with homogeneous boundary condition and then applying Lax-Milgram lemma to the bilinear form associated with $P$, we derive that \eqref{exBVP} has a unique solution $u=u(\mathfrak{a})\in H^1(U)$. After transforming again \eqref{exBVP} to a boundary value problem with homogeneous boundary condition, we apply \cite[Theorem 9.25]{Brezis2011} and Remark 24 just after \cite[Theorem 9.26]{Brezis2011} to obtain that $u(\mathfrak{a})\in H^2(U)$.

Let $\Omega\Supset B$ be a Lipschitz bounded domain such that $\Omega\setminus\overline{B}$ is connected with boundary $\Gamma:=\partial\Omega$. We discuss the inverse problem of determining the unknown function $\mathfrak{a}$ from the knowledge of the two measurements $(u(\mathfrak{a})_{|\Gamma},\partial_\nu u(\mathfrak{a})_{|\Gamma})\in H^{3/2}(\Gamma)\times H^{1/2}(\Gamma)$, where $\nu$ denotes the outer unit normal to $\Gamma$.

This type of inverse problem setup is common in satellite gravitational gradiometry, where the gravitational potential of the Earth's surface is determined from observations of the gravitational potential and gravity on the satellite orbit (e.g., Pereverzev-Schock \cite{Pereverzev1999} and Freeden-Nashed \cite[Chapter 7]{Freeden2023}). 

We also discuss a similar inverse problem for the interior boundary value problem with a $C^2$ bounded domain $B$
\begin{equation}\label{inBVP}
\begin{cases}Pv=0\quad &\text{in}\; B,
\\
v=\mathfrak{a}\quad &\text{on}\; \mathcal{S},
\end{cases}
\end{equation}
when the measurement is made on $\Gamma:=\partial \Omega$, with a Lipschitz bounded open set $\Omega\Subset B$ such that $B\setminus\overline{\Omega}$ is connected. Henceforth, if $\mathfrak{a}\in H^{3/2}(\mathcal{S})$, the unique solution of \eqref{inBVP} will denoted by $v(\mathfrak{a})$. As for \eqref{exBVP}, after transforming again \eqref{inBVP} to a boundary value problem with homogeneous boundary condition, we get $v(\mathfrak{a})\in H^2(B)$ by applying \cite[Theorem 9.25]{Brezis2011}  and Remark 24 just after \cite[Theorem 9.26]{Brezis2011}. 

We now define the admissible set of the unknown function $\mathfrak{a}$. Let $\alpha >0$ and $\beta >0$ be fixed and define 
\[
\mathcal{A}=\{\mathfrak{a}\in H^{3/2}(\mathcal{S});\; \|\mathfrak{a}\|_{L^2(\mathcal{S})}\ge \alpha\;  \mbox{and}\; \|\nabla_\tau \mathfrak{a}\|_{L^2(\mathcal{S})}\le \beta\},
\]
where $\nabla_\tau$ denotes the tangential gradient.

Our main objective is to prove the following two theorems.

\begin{thm}\label{exLipschitz}
Let $B\subset\mathbb{R}^n$ be a $C^2$ bounded open set, $\Omega\Supset B$ be a Lipschitz bounded domain such that $\Omega\setminus\overline{B}$ is connected and set $\zeta_0=(g,p,B,\Omega,\beta/\alpha)$. Then there exists $C=C(\zeta_0)>0$ such that for any $\mathfrak{a}\in \mathcal{A}$ and $u(\mathfrak{a})\in H^2(U)$ satisfying \eqref{exBVP} we have
\[
\|\mathfrak{a}\|_{H^1(\mathcal{S})}\le C\left(\|u(\mathfrak{a})\|_{H^1(\Gamma)}+\|\partial_\nu u(\mathfrak{a})\|_{L^2(\Gamma)}
\right),
\]
where  $\nu$ denotes the outer unit normal to $\Gamma$.
\end{thm}

\begin{thm}\label{inLipschitz}
Let $B\subset\mathbb{R}^n$ be a $C^2$ bounded domain, $\Omega\Subset B$ be a Lipschitz bounded open set such that $B\setminus\overline{\Omega}$ is connected and set $\zeta_0=(g,p,B,\Omega,\beta/\alpha)$. Then there exists $C=C(\zeta_0)>0$ such that for any $\mathfrak{a}\in\mathcal{A}$ and $v(\mathfrak{a})\in H^2(B)$ satisfying \eqref{inBVP} we have
\[
\|\mathfrak{a}\|_{H^1(\mathcal{S})}\le C\left(\|v(\mathfrak{a})\|_{H^1(\Gamma)}+\|\partial_\nu v(\mathfrak{a})\|_{L^2(\Gamma)}
\right),
\]
where $\nu$ denotes the outer unit normal to $\Gamma$.
\end{thm}

Theorem \ref{exLipschitz} and \ref{inLipschitz} give the conditional Lipschitz stability estimates with the unknown function $\mathfrak{a}$ constrained to $\mathcal{A}$.
To our knowledge, these results give for the first time stability inequalities for the inverse problems we consider.

Theorems \ref{exLipschitz} and \ref{inLipschitz} still hold if instead of $P$ we take $P$ plus an operator of first order provided that the corresponding boundary value problems admit a unique $H^2$ solution.

\section{Carleman type inequality}\label{section2}

In this section, we prove a Carleman type inequality with a second large parameter that can be applied to both exterior problem \eqref{exBVP} and interior problem \eqref{inBVP}. As it was noted by many authors, the role of the second large parameter is to ensure the so-called H\"ormander's pseudo-convexity condition. The proof is similar to Choulli \cite{Choulli2016} in the estimate of terms inside a domain, but the estimate of boundary terms is original to this paper. For the reader's convenience, we provide  the main steps of the proof of this Carleman type inequality.

Let $D\subset\mathbb{R}^n$ be a Lipschitz bounded domain with boundary $\Lambda:=\partial D$ and $\Upsilon$ be a nonempty subboundary of $\Lambda$ so that $\Pi:=\Lambda \setminus \Upsilon$ has nonempty interior. Assume that there exists $\phi\in C^2(\overline{D})$ satisfying
\begin{equation}\label{weight_function}
\begin{cases}
 \phi>0,\quad \mbox{in}\; D,
\\
\phi_{|\Upsilon}=0,
\\
\displaystyle \delta:=\min_{\overline{D}}|\nabla\phi|>0.
\end{cases}
\end{equation}
Let $\nu$ be the outer unit normal to $\Lambda$ and recall that the tangential gradient $\nabla_\tau $ is defined by $\nabla_\tau w:=\nabla w-(\partial_\nu w)\nu$. The surface element on $\Lambda$ will be denoted by $dS$.

For convenience, we recall the following usual notations
\begin{align*}
&\langle X,Y\rangle=g_{ij}X^iY^j,\quad X=X^i\frac{\partial}{\partial x_i},\; Y=Y^i\frac{\partial}{\partial  x_i},
\\
&\nabla_gw=g^{ij}\partial_i w\frac{\partial}{\partial x_j},\quad w\in H^1(D),
\\
&|\nabla_gw|_g^2=\langle\nabla_gw,\nabla_gw\rangle=g^{ij}\partial_i w\partial_j w, \quad w\in H^1(D),
\\
&\nu_g=(\nu_g)^i\frac{\partial}{\partial x_j},\quad (\nu_g)^i=\frac{g^{ij}\nu_j}{\sqrt{g^{k\ell}\nu_k\nu_\ell}}
\\
&\partial_{\nu_g}w=\langle\nu_g,\nabla_g w\rangle, \quad w\in H^1(D).
\end{align*}
Also, define the tangential gradient $\nabla_{\tau_g} w$ with respect to $g$ by
\[
\nabla_{\tau_g} w:=\nabla_g w-(\partial_{\nu_g} w)\nu_g.
\]
We find that $|\nabla_{\tau_g} w|_g^2=|\nabla_g w|_g^2-|\partial_{\nu_g} w|^2$ holds.

\begin{prop}\label{global_Carleman_estimate}

Let $\zeta_1=(g,p,D,\Upsilon,\phi,\delta)$, $\varphi:=e^{\gamma\phi}$ and $\sigma:=s\gamma\varphi$. There exist $\gamma_\ast=\gamma_\ast(\zeta_1)>0$, $s_\ast=s_\ast (\zeta_1)>0$ and $C=C(\zeta_1)>0$ such that for any $\gamma \ge \gamma_\ast$, $s\ge s_\ast$ and $u\in H^2(D)$ we have
\begin{align*}
&C\left(\int_D e^{2s\varphi}\sigma(\gamma|\nabla u|^2+\gamma\sigma^2|u|^2)dx+\int_\Upsilon e^{2s\varphi}\sigma(|\partial_{\nu_g}u|^2+\sigma^2|u|^2)dS\right)
\\
&\hskip 1.5cm \le \int_D e^{2s\varphi}|Pu|^2dx+\int_\Pi e^{2s\varphi}\sigma(|\nabla u|^2+\sigma^2|u|^2)dS
\\
&\hskip 6cm+\int_\Upsilon e^{2s\varphi}\sigma|\nabla_\tau u|^2dS.\end{align*}
\end{prop}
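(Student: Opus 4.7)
The plan is to follow the standard conjugation and splitting approach for Carleman estimates, with particular care given to the boundary contributions since these are highlighted as the novel ingredient. First I would introduce the new unknown $w := e^{s\varphi} u$ and consider the conjugated operator $L_s w := e^{s\varphi} P(e^{-s\varphi} w)$. Expanding in powers of $s$ and splitting,
\[
L_s w = L_+ w + L_- w + R_s w,
\]
where $L_+$ collects the terms even in $s$ (essentially self-adjoint in $L^2(D)$), $L_-$ collects the terms odd in $s$ (essentially skew-adjoint), and $R_s$ is a lower-order remainder absorbable once $s$ is large. The identity $\|L_+ w + L_- w\|^2 = \|L_+ w\|^2 + \|L_- w\|^2 + 2(L_+ w, L_- w)$ then reduces the question to bounding the cross term from below.

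The second step is to integrate by parts the cross term $2(L_+ w, L_- w)_{L^2(D)}$. In the interior, after cancellations, this produces a quadratic form bounded below by
\[
c \int_D \Bigl(\gamma^2 s \varphi |\nabla \phi|^2 |\nabla w|^2 + \gamma^3 s^3 \varphi^3 |\nabla\phi|^4 |w|^2\Bigr) dx
\]
up to terms of lower order in the two large parameters. This is where H\"ormander's pseudo-convexity is verified: the extra derivatives in $\gamma$ acting on $\varphi = e^{\gamma\phi}$ dominate the possibly indefinite contributions of the Hessian of $\phi$ and of the principal symbol of $P$, provided $\gamma$ is large, thanks crucially to $|\nabla \phi|\ge \delta$. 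Taking $\gamma\ge\gamma_\ast$ absorbs the indefinite interior contributions; then $s\ge s_\ast$ absorbs $R_s$ and the zero-order term $pu$.

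The heart of the proof is the analysis of the boundary integrals produced by the interior integrations by parts on $\Lambda = \Upsilon \cup \Pi$. I would decompose, on each boundary, $\nabla w = \nabla_{\tau_g} w + (\partial_{\nu_g} w)\nu_g$ and carefully track the sign of each contribution. On $\Upsilon$, the condition $\phi_{|\Upsilon}=0$ together with $|\nabla \phi|\ge \delta >0$ forces $\nabla \phi$ to be normal to $\Upsilon$, so $\nabla_{\tau_g}\varphi = 0$ there while $|\partial_{\nu_g}\varphi|$ is bounded below by a positive multiple of $\gamma$. This makes the boundary quadratic form definite in the correct direction and yields a lower bound of the form $\sigma|\partial_{\nu_g} w|^2 + \sigma^3 |w|^2$ on $\Upsilon$, with only the harmless term $\sigma |\nabla_{\tau_g} w|^2$ needing to be transferred to the right-hand side. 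On $\Pi$, where $\phi$ has no sign, all boundary contributions are simply estimated in absolute value by $\sigma(|\nabla w|^2 + \sigma^2 |w|^2)$, matching the $\Pi$ term in the statement.

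Finally I would undo the substitution using $\nabla u = e^{-s\varphi}(\nabla w - s w \nabla\varphi)$, so that each weighted integral $\int e^{2s\varphi}\sigma^k|\nabla u|^2$ or $\int e^{2s\varphi}\sigma^k|u|^2$ is, up to terms absorbable for $s$ large, comparable to the corresponding quantity in $w$. The analogous identity for $|\partial_{\nu_g} u|^2$ on $\Upsilon$ is particularly clean since $\varphi$ is constant there. The main obstacle I expect is precisely the third step: extracting, with the correct signs, both $|\partial_{\nu_g} u|^2$ and $\sigma^2 |u|^2$ from the boundary quadratic form on $\Upsilon$ while pushing only $|\nabla_{\tau_g} u|^2$ to the right-hand side. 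Once this bookkeeping is done, the rest is a standard matter of fixing the order of the asymptotics, namely $\gamma \to \infty$ first, then $s\to \infty$.
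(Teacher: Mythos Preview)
Your plan is correct and follows essentially the same route as the paper: conjugate, split into $P_s^\pm$, integrate the cross term by parts, use $|\nabla\phi|\ge\delta$ and large $\gamma$ for interior pseudo-convexity, and then treat $\Upsilon$ and $\Pi$ separately exactly as you describe. The one point you leave implicit and should make explicit when writing it up is that on $\Upsilon$ one has not merely $|\partial_{\nu_g}\phi|\ge c>0$ but in fact $\partial_{\nu_g}\phi<0$ (since $\phi>0$ in $D$ and $\phi=0$ on $\Upsilon$, so $\nu_g=-\nabla_g\phi/|\nabla_g\phi|_g$), and it is precisely this sign that makes the boundary quadratic form on $\Upsilon$ definite in the favorable direction; the paper then disposes of the mixed term $\partial_{\nu_g}z\cdot z$ by completing the square, which matches the ``main obstacle'' you anticipated.
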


\begin{proof}
As usual, it suffices to show the inequality when $p=0$. Let $u\in H^2(D)$, $z:=e^{s\varphi}u$ and $P_sz:=e^{s\varphi}L(e^{-s\varphi}z)$, where we set $L=\Delta_g$. By $L\varphi=\gamma\varphi(L\phi+\gamma|\nabla_g\phi|_g^2)$, a direct calculation yields $P_sz=P_s^+z+P_s^-z-\sigma L\phi z$, where
\begin{align*}
&P_s^+z:=Lz+s^2|\nabla_g\varphi|_g^2z,
\\
&P_s^-z:=-2s\langle\nabla_g\varphi,\nabla_g z\rangle-\gamma\sigma |\nabla_g\phi|_g^2 z.
\end{align*}

Let $dV_g= \sqrt{|g|}dx$ and endow $L^2(D)$ with following inner product 
\[
(v,w)_g:=\int_D uvdV_g.
\]
The norm associated to this inner product is denoted by $\|\cdot \|_g$.

Hereinafter, the integrals on $D$ are with respect to the measure $dV_g$ and those on $\Lambda$ are with respect to the surface measure $dS_g= \sqrt{|g|}dS$. Successive integrations by parts yield
\begin{align*}
&(P_s^+z,P_s^-z)_g
=\int_D2s\nabla_g^2\varphi(\nabla_g z,\nabla_g z)-\int_D \sigma L\phi|\nabla_g z|_g^2+\int_D\gamma\langle\nabla_g(\sigma|\nabla_g\phi|_g^2),\nabla_g z\rangle z
\\
&\hskip 2.5cm+\int_D\left[\sigma^3|\nabla_g\phi|_g^2L\phi +2s^3\nabla_g^2\varphi(\nabla_g\varphi,\nabla_g\varphi)\right]|z|^2
\\
&\hskip 3cm-\int_\Lambda \left[2s\langle\nabla_g\varphi,\nabla_g z\rangle+\gamma\sigma|\nabla_g\phi|_g^2 z\right]\partial_{\nu_g} z 
\\
&\hskip 3.5cm +\int_\Lambda s\partial_{\nu_g}\varphi|\nabla_g z|_g^2 -\int_\Lambda s^3|\nabla_g\varphi|_g^2\partial_{\nu_g}\varphi |z|^2
\end{align*}
and
\begin{align*}
&(P_s^+z,-\gamma\sigma |\nabla_g\phi|_g^2 z)_g
=\int_D\gamma\sigma |\nabla_g\phi|_g^2 |\nabla_g z|_g^2
\\
&\hskip 4cm +\int_D\gamma\langle\nabla_g(\sigma|\nabla_g\phi|_g^2),\nabla_g z\rangle z -\int_D\gamma\sigma^3 |\nabla_g\phi|_g^4 |z|^2
\\
&\hskip 5.5cm -\int_\Lambda \gamma\sigma\partial_{\nu_g} z|\nabla_g\phi|_g^2 z.
\end{align*}
Adding the above equalities yields
\begin{align*}
&(P_s^+z,P_s^-z)_g+(P_s^+z,-\gamma\sigma|\nabla_g\phi|_g^2 z)_g-\int_D 2\gamma\langle\nabla_g(\sigma|\nabla_g\phi|_g^2),\nabla_g z\rangle z
\\
&\hskip 1cm =\int_D\left[2s\nabla_g^2\varphi(\nabla_g z,\nabla_g z)+\sigma(-L\phi+\gamma|\nabla_g\phi|_g^2)|\nabla_g z|_g^2\right]
\\
&\hskip 1.5 cm +\int_D\left[\sigma^3|\nabla_g\phi|_g^2L\phi +2s^3\nabla_g^2\varphi(\nabla_g\varphi,\nabla_g\varphi)-\gamma\sigma^3 |\nabla_g\phi|_g^4\right]|z|^2
\\
&\hskip 2cm-\int_\Lambda 2\left[s\langle\nabla_g\varphi,\nabla_g z\rangle+\gamma\sigma|\nabla_g\phi|_g^2 z\right]\partial_{\nu_g} z
\\
&\hskip 2.5cm +\int_\Lambda s\partial_{\nu_g}\varphi|\nabla_g z|_g^2-\int_\Lambda s^3|\nabla_g\varphi|_g^2\partial_{\nu_g}\varphi |z|^2.
\end{align*}

Henceforth, $C=C(\zeta_1)>0$, $\gamma_\ast=\gamma_\ast(\zeta_1)>0$ and $s_\ast=s_\ast(\zeta_1)>0$ denote generic constants. 

By \eqref{weight_function}, we have
\[
|\nabla_g\phi|_g\ge C|\nabla\phi|\ge C\delta.
\]
Hence
\begin{align*}
&2s\nabla_g^2\varphi(\nabla_g z,\nabla_g z)+\sigma(-L\phi+\gamma|\nabla_g\phi|_g^2)|\nabla_g z|_g^2
\\
&\hskip 1.2cm =\sigma\left(2\nabla_g^2\phi(\nabla_g z,\nabla_g z)+2\gamma|\langle\nabla_g\phi,\nabla_g z\rangle|^2-L\phi|\nabla_g z|_g^2+\gamma|\nabla_g\phi|_g^2|\nabla_g z|_g^2\right)
\\
&\hskip 1.2cm \ge\sigma\left(2\nabla_g^2\phi(\nabla_g z,\nabla_g z)-L\phi|\nabla_g z|_g^2+\gamma|\nabla_g\phi|_g^2|\nabla_g z|_g^2\right)\\
&\hskip 1.2cm\ge C\gamma\sigma|\nabla_g z|_g^2,\quad \gamma \ge \gamma_\ast,
\end{align*}
and
\begin{align*}
&\sigma^3|\nabla_g\phi|_g^2L\phi +2s^3\nabla_g^2\varphi(\nabla_g\varphi,\nabla_g\varphi)-\gamma\sigma^3 |\nabla_g\phi|_g^4
\\
&\hskip 1.5cm=\sigma^3\left(2\nabla_g^2\phi(\nabla_g\phi,\nabla_g\phi)+\gamma|\nabla_g\phi|_g^4+L\phi|\nabla_g\phi|_g^2\right)
\\
&\hskip 1.5cm\ge C\gamma\sigma^3, \quad \gamma \ge \gamma_\ast.
\end{align*}
In consequence, we get
\begin{align*}&(P_s^+z,P_s^-z)_g+(P_s^+z,-\gamma\sigma|\nabla_g\phi|_g^2 z)_g-\int_D 2\gamma\langle\nabla_g(\sigma|\nabla_g\phi|_g^2),\nabla_g z\rangle z
\\
&\hskip 2cm\ge C\int_D\gamma\sigma|\nabla_g z|_g^2+C\int_D\gamma\sigma^3|z|^2-\mathcal{B},\quad \gamma \ge \gamma_\ast,
\end{align*}
where $\mathcal{B}:=\mathcal{B}_\Pi+\mathcal{B}_\Upsilon$, with
\begin{align*}\mathcal{B}_\Pi&:=\int_\Pi \left[2s\partial_{\nu_g} z\langle\nabla_g\varphi,\nabla_g z\rangle+2\gamma\sigma \partial_{\nu_g} z|\nabla_g\phi|_g^2 z \right.
\\
&\hskip 2cm \left.+s^3|\nabla_g\varphi|_g^2\partial_{\nu_g}\varphi|z|^2-s\partial_{\nu_g}\varphi|\nabla_g z|_g^2\right]
\end{align*}
and
\begin{align*}\mathcal{B}_\Upsilon&:=\int_\Upsilon \left[2s\partial_{\nu_g} z\langle\nabla_g\varphi,\nabla_g z\rangle+2\gamma\sigma \partial_{\nu_g} z |\nabla_g\phi|_g^2 z \right.
\\
&\hskip 2cm \left.+s^3|\nabla_g\varphi|_g^2\partial_{\nu_g}\varphi|z|^2-s\partial_{\nu_g}\varphi|\nabla_g z|_g^2\right].
\end{align*}

We check that
\[
\mathcal{B}_\Pi \le C\int_\Pi \sigma(|\nabla_g z|_g^2+\sigma^2|z|^2).
\]
On the other hand, according to \eqref{weight_function}, we have
\[
|\nabla_g z|_g^2=|\nabla_{\tau_g}z|_g^2+|\partial_{\nu_g} z|^2,\quad \langle\nabla_g\varphi,\nabla_g z\rangle=-\gamma\varphi|\nabla_g\phi|_g\partial_{\nu_g} z\quad \mbox{on}\; \Upsilon
\]
and $\nu_g=-\frac{\nabla_g\phi}{|\nabla_g\phi|_g}$ on $\Upsilon$. Whence
\begin{align*}
\mathcal{B}_\Upsilon&=\int_\Upsilon \left[-2\sigma|\nabla_g\phi|_g|\partial_{\nu_g} z|^2+2\gamma\sigma\partial_{\nu_g} z|\nabla_g\phi|_g^2 z \right.
\\
&\hskip 2.5cm \left. -\sigma^3|\nabla_g\phi|_g^3|z|^2+\sigma|\nabla_g\phi|_g|\nabla_g z|_g^2\right]
\\
&=\int_\Upsilon \sigma|\nabla_g\phi|_g\left[-|\partial_{\nu_g} z|^2+2\gamma\partial_{\nu_g} z|\nabla_g\phi|_g z \right.
\\
&\hskip 2.5cm\left. -\sigma^2|\nabla_g\phi|_g^2|z|^2+|\nabla_{\tau_g}z|_g^2\right],
\end{align*}
which means 
\begin{align*}
&\mathcal{B}_\Upsilon+(1/2)\int_\Upsilon \sigma|\nabla_g\phi|_g|\partial_{\nu_g}z|^2
\\
&\hskip 2cm =\int_\Upsilon\sigma|\nabla_g\phi|_g\left[-(1/2)|\partial_{\nu_g} z|^2+2\gamma\partial_{\nu_g} z|\nabla_g\phi|_g z \right.
\\
&\hskip 4.5cm\left. -\sigma^2|\nabla_g\phi|_g^2|z|^2+|\nabla_{\tau_g}z|_g^2\right].
\end{align*}
We note that
\begin{align*}
&-(1/2)|\partial_{\nu_g} z|^2+2\gamma\partial_{\nu_g} z|\nabla_g\phi|_g z
\\
&\hskip 2cm\le-(1/2)\left|\partial_{\nu_g} z-2\gamma|\nabla_g\phi|_g z\right|^2+C\gamma^2|z|^2
\\
&\hskip 2cm\le C\gamma^2|z|^2.
\end{align*}
Therefore, we obtain
\[\mathcal{B}_\Upsilon+(1/2)\int_\Upsilon\sigma|\nabla_g\phi|_g|\partial_{\nu_g}z|^2\le \int_\Upsilon\sigma|\nabla_g\phi|_g(C\gamma^2|z|^2-\sigma^2|\nabla_g\phi|_g^2|z|^2+|\nabla_{\tau_g}z|_g^2)
\]
and then
\[
\mathcal{B}_\Upsilon+C\int_\Upsilon\sigma(|\partial_{\nu_g}z|^2+\sigma^2|z|^2)\le \int_\Upsilon\sigma|\nabla_g\phi|_g|\nabla_{\tau_g}z|_g^2,\quad s\ge s_\ast.
\]
Combining the estimates above, we get 
\begin{align*}
&C\left( \int_D\sigma (\gamma|\nabla_g z|_g^2+\gamma\sigma^2|z|^2)+\int_\Upsilon\sigma(|\partial_{\nu_g}z|^2+\sigma^2|z|^2)\right)
\\
&\hskip 1cm \le  (P_s^+z,P_s^-z)_g+(P_s^+z,-\gamma\sigma|\nabla_g\phi|_g^2 z)_g
\\
&\hskip 3cm-\int_D 2\gamma\langle\nabla_g(\sigma|\nabla_g\phi|_g^2),\nabla_g z\rangle z+\mathcal{B}_\Pi+\int_\Upsilon\sigma|\nabla_g\phi|_g|\nabla_{\tau_g}z|_g^2 
\\
&\hskip 1cm \le (1/2)\|P_s^+ z+P_s^- z\|_g^2+C\int_D\gamma^2\varphi(|\nabla_g z|_g^2+\sigma^2|z|^2)
\\
&\hskip 3cm+C\int_{\Pi}\sigma(|\nabla_g z|_g^2+\sigma^2|z|^2)+C\int_\Upsilon\sigma|\nabla_{\tau_g}z|_g^2,\quad s\ge s_\ast.
\end{align*}
By
\[\|P_s^+z+P_s^- z\|_g^2=\|P_s z+\sigma L\phi z\|_g^2\le2\|P_s z\|_g^2+2\|\sigma L\phi z\|_g^2,\]
we have
\begin{align*}
&C\left( \int_D\sigma (\gamma|\nabla_g z|_g^2+\gamma\sigma^2|z|^2)+\int_\Upsilon\sigma(|\partial_{\nu_g}z|^2+\sigma^2|z|^2)\right)
\\
&\hskip 1cm \le \|P_s z\|_g^2+\int_D\gamma^2\varphi(|\nabla_g z|_g^2+\sigma^2|z|^2)
\\
&\hskip 4cm+\int_{\Pi}\sigma(|\nabla_g z|_g^2+\sigma^2|z|^2)+\int_\Upsilon\sigma|\nabla_{\tau_g}z|_g^2,\quad s\ge s_\ast.
\end{align*}
As the second term in the right-hand side  can absorbed by the left-hand side, we have in particular
\begin{align*}
&C\left( \int_D\sigma (\gamma|\nabla_g z|_g^2+\gamma\sigma^2|z|^2)+\int_\Upsilon\sigma(|\partial_{\nu_g}z|^2+\sigma^2|z|^2)\right)
\\
&\hskip 2cm\le \|P_s z\|_g^2+\int_\Pi \sigma(|\nabla_g z|_g^2+\sigma^2|z|^2)+\int_\Upsilon\sigma|\nabla_{\tau_g}z|_g^2,\quad s\ge s_\ast.
\end{align*}
Since $u=e^{-s\varphi}z$ and $\nabla_{\tau_g}u=e^{-s\varphi}\nabla_{\tau_g}z$ holds by $\nabla_{\tau_g}\phi=0$ on $\Upsilon$, we end up getting
\begin{align*}
&C\left( \int_D e^{2s\varphi}\sigma (\gamma|\nabla_g u|_g^2+\gamma\sigma^2|u|^2)+\int_\Upsilon e^{2s\varphi}\sigma(|\partial_{\nu_g}u|^2+\sigma^2|u|^2)\right)
\\
&\hskip 1cm\le \int_D e^{2s\varphi}|Lu|^2+\int_\Pi e^{2s\varphi}\sigma(|\nabla_g u|_g^2+\sigma^2|u|^2)+\int_\Upsilon e^{2s\varphi}\sigma|\nabla_{\tau_g}u|_g^2, \quad s\ge s_\ast.
\end{align*}
Equivalently, we have
\begin{align*}
&C\left( \int_D e^{2s\varphi}\sigma (\gamma|\nabla u|^2+\gamma\sigma^2|u|^2)dx+\int_\Upsilon e^{2s\varphi}\sigma(|\partial_{\nu_g}u|^2+\sigma^2|u|^2)dS\right)
\\
&\hskip 1cm\le \int_D e^{2s\varphi}|Lu|^2dx+\int_\Pi e^{2s\varphi}\sigma(|\nabla u|^2+\sigma^2|u|^2)dS
\\
&\hskip 6.5cm+\int_\Upsilon  e^{2s\varphi}\sigma|\nabla_\tau u|^2dS,\quad s\ge s_\ast.
\end{align*} 
The proof is then complete.
\end{proof}

\section{Proofs of main results}\label{section3}

Before proving Theorem \ref{exLipschitz} and Theorem \ref{inLipschitz}, we show that a weight function $\phi$ satisfying \eqref{weight_function} can be constructed for each problem in order to apply Proposition \ref{global_Carleman_estimate}.

\begin{lem}\label{exB}
Let $B$ and $\Omega$ be the open sets satisfying the same assumptions as in Theorem \ref{exLipschitz}. Then there exists $\phi\in C^2(\overline{\Omega\setminus B})$ satisfying
\[
\begin{cases}
\phi>0,\quad \mbox{in}\; \Omega\setminus\overline{B}
\\
 \phi_{|\mathcal{S}}=0,
\\
\displaystyle \delta:=\min_{ \overline{\Omega\setminus B}}|\nabla\phi| >0.
\end{cases}
\]
\end{lem}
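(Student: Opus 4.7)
The plan is to construct $\phi$ by gluing the signed distance function $\rho(x):=\mathrm{dist}(x,\mathcal{S})$, which works near $\mathcal{S}$, with an auxiliary $C^4$ function whose gradient does not vanish on the rest of $\overline{\Omega\setminus B}$.

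First, since $\mathcal{S}$ is of class $C^4$, the tubular neighborhood theorem yields $r_0>0$ such that $\rho$ is $C^4$ on $V_{r_0}:=\{x\in\overline{\Omega\setminus B}:\rho(x)<r_0\}$, with $|\nabla\rho|\equiv 1$, $\rho|_{\mathcal{S}}=0$, and $\rho>0$ on $V_{r_0}\setminus\mathcal{S}$. On $V_{r_0}$ alone the function $\rho$ already meets the three conditions of the lemma, so if $V_{r_0}\supseteq\overline{\Omega\setminus B}$ we simply take $\phi=\rho$.

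Otherwise, fix $0<r_1<r_0/2$ and set $K:=\overline{\Omega\setminus B}\setminus\{\rho<r_1\}$. I would then construct a $C^4$ function $\psi$ on $K$ with $\psi\ge r_1$, $|\nabla\psi|\ge c>0$, and matching $\rho$ to sufficient order on $\{\rho=r_1\}\cap K$, and glue via a smooth cutoff $\chi\in C^\infty(\mathbb{R})$ equal to $0$ on $(-\infty,r_1/2]$ and $1$ on $[r_1,\infty)$, setting $\phi:=(1-\chi(\rho))\rho+\chi(\rho)\psi$. From
\[
\nabla\phi=(1-\chi(\rho))\nabla\rho+\chi(\rho)\nabla\psi+\chi'(\rho)(\psi-\rho)\nabla\rho,
\]
one checks that tight matching on $\{\rho=r_1\}$ makes the last term negligible on the transition band, so $|\nabla\phi|$ stays bounded below throughout; away from the band $\phi$ coincides with $\rho$ or $\psi$ and the bound is automatic.

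The main obstacle is the construction of $\psi$ on $K$ with $|\nabla\psi|$ bounded below. My strategy is to produce a nowhere-vanishing $C^4$ vector field $X$ on a neighborhood of $\overline{\Omega\setminus B}$ that coincides with $\nabla\rho$ on $V_{r_0}$, obtained by smoothly interpolating between $\nabla\rho$ near $\mathcal{S}$ and a sufficiently large constant ambient vector further out, with care to avoid cancellation. Then $\psi(x)$ is defined as the time it takes the flow of $X/|X|^2$ starting from $\mathcal{S}$ to reach $x$, yielding $X\cdot\nabla\psi\equiv 1$ and hence $|\nabla\psi|\ge 1/\|X\|_{L^\infty}$. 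The delicate point is ensuring the integral curves of $X$ foliate $K$ without self-intersection so that $\psi$ is globally well-defined and $C^4$; for instance when $B$ is star-shaped with respect to an interior point $y_0$ one may take $X(x)=x-y_0$ so the flow lines are genuine rays foliating $\overline{\Omega\setminus B}$, while in the general case a partition-of-unity gluing on $K$ or a Morse-theoretic argument is needed.
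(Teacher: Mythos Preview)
Your proposal has a genuine gap in the general case. The distance function handles a tubular neighborhood of $\mathcal{S}$ correctly, but your extension $\psi$ to the remaining compact set $K$ is only actually constructed when $B$ is star-shaped. For general $B$ and $\Omega$ you propose to build a nowhere-vanishing $C^4$ vector field $X$ agreeing with $\nabla\rho$ near $\mathcal{S}$ and define $\psi$ as the arrival time of the flow of $X/|X|^2$ from $\mathcal{S}$; but interpolating $\nabla\rho$---which, as the outward normal to $B$, points in every direction as one moves around $\mathcal{S}$---with a single constant vector will unavoidably create zeros somewhere in the transition region, and even if $X$ were nowhere-vanishing its integral curves need not foliate $K$ transversally without recurrence, so the ``arrival time'' may fail to be single-valued or $C^4$. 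You acknowledge this yourself and defer it to ``a partition-of-unity gluing on $K$ or a Morse-theoretic argument,'' but that deferred step is precisely the substantive content of the lemma.

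The paper sidesteps all of this by quoting a standard black-box result (Theorem~9.4.3 in Tucsnak--Weiss, \emph{Observation and Control for Operator Semigroups}): for a bounded $C^4$ domain $D$ and a nonempty open subset $\mathcal{O}\subset D$ there exists $\phi\in C^4(\overline{D})$ with $\phi>0$ in $D$, $\phi|_{\partial D}=0$, and $|\nabla\phi|>0$ on $\overline{D}\setminus\mathcal{O}$. One applies it with $D=B_R\setminus\overline{B}$ for a large ball $B_R\Supset\Omega$ and $\mathcal{O}=B_R\setminus\overline{\Omega}$, so every critical point of $\phi$ is pushed outside $\overline{\Omega}$; restricting $\phi$ to $\overline{\Omega\setminus B}$ yields the weight. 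The proof underlying that cited theorem is itself Morse-theoretic (perturb a boundary-defining function to a Morse function and move its finitely many critical points into $\mathcal{O}$), which is exactly the argument you gesture at but do not supply.
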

\begin{proof}
Let $B_R\Supset\Omega$ be an open ball centered at $0$ with radius $R>0$. Applying  \cite[Theorem 9.4.3]{Tucsnak2009} for $\mathcal{O}:=B_R\setminus\overline{\Omega}\subset B_R\setminus\overline{B}$, we obtain the desired function $\phi$.
\end{proof}

\begin{lem}\label{inB}
Let $B$ and $\Omega$ be the open sets satisfying the same assumptions as in Theorem \ref{inLipschitz}. Then there exists $\phi\in C^2(\overline{B})$ satisfying
\[
\begin{cases}
\phi>0,\quad \mbox{in}\; B
\\
\phi_{|\mathcal{S}}=0,
\\
\displaystyle \delta:=\min_{ \overline{B\setminus\Omega}}|\nabla\phi|>0.
\end{cases}
\]
\end{lem}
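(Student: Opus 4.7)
The plan is to repeat the construction of Lemma~\ref{exB} with the roles of the inner and outer region interchanged, again via [Tucsnak2009, Theorem~9.4.3]. That theorem states (in essence) that for any bounded $C^k$ domain $G \subset \mathbb{R}^n$ and any nonempty open subset $\mathcal{O} \subset G$, there exists $\psi \in C^k(\overline{G})$ with $\psi > 0$ in $G$, $\psi_{|\partial G} = 0$, and $|\nabla \psi| > 0$ on $\overline{G \setminus \mathcal{O}}$. This is exactly the tool needed here, and the only substantive work is to choose the sets correctly.

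First, I would take $G := B$, so that $\partial G = \mathcal{S}$ and the prescribed boundary vanishing $\phi_{|\mathcal{S}} = 0$ is automatically the conclusion of the theorem rather than an added requirement. Next, I would set $\mathcal{O} := \Omega$; the hypothesis $\Omega \Subset B$ guarantees that $\Omega$ is a nonempty open subset of $B$ with $\overline{\Omega} \subset B$, so the cited theorem applies with $k = 4$. This directly produces $\phi \in C^4(\overline{B})$ with $\phi > 0$ in $B$ and $\phi = 0$ on $\mathcal{S}$, along with $|\nabla \phi(x)| > 0$ for every $x \in \overline{B \setminus \Omega}$. Since $\overline{B \setminus \Omega}$ is compact and $|\nabla \phi|$ is continuous, this pointwise lower bound immediately upgrades to the uniform bound
\[
\delta := \min_{\overline{B \setminus \Omega}} |\nabla \phi| > 0,
\]
completing all three conditions.

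I do not anticipate any real obstacle. The only two points worth flagging are both mild: (i) one must verify that $\overline{G \setminus \mathcal{O}} = \overline{B \setminus \Omega}$ is the precisely the set appearing in the lemma's conclusion, which is immediate from the definitions; and (ii) in contrast to Lemma~\ref{exB}, no auxiliary enlargement of the ambient domain (the ball $B_R$ in the exterior case) is required, because the outer boundary $\mathcal{S}$ on which the weight must vanish is already the boundary of $B$ itself. Thus the interior version is, if anything, a strict simplification of the exterior construction.
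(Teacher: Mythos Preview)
Your proof is correct and follows essentially the same approach as the paper: the paper also applies \cite[Theorem 9.4.3]{Tucsnak2009} with $\mathcal{O}:=\Omega\subset B$, yielding the desired $\phi$ directly. Your additional remarks about compactness and the absence of an auxiliary ball are accurate elaborations but do not differ in substance from the paper's (terser) argument.
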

\begin{proof}
As in the preceding lemma, we get $\phi$ with the required properties by applying \cite[Theorem 9.4.3]{Tucsnak2009} with $\mathcal{O}:=\Omega\subset B$.
\end{proof}

\begin{proof}[Proof of Theorem \ref{exLipschitz}]
Let $\mathfrak{a}\in \mathcal{A}$ and $u=u(\mathfrak{a})$. In this case, we have
\begin{equation}\label{t1.1}
\|\nabla_\tau u\|_{L^2(\mathcal{S})}=\|\nabla_\tau \mathfrak{a}\|_{L^2(\mathcal{S})}\le (\beta /\alpha)\|\mathfrak{a}\|_{L^2(\mathcal{S})}.
\end{equation}

By Lemma \ref{exB}, there exist $\phi\in C^2(\overline{\Omega\setminus B})$ such that \eqref{weight_function} is satisfied for $D:=\Omega\setminus\overline{B}$ and $\Upsilon:=\mathcal{S}$. Fix $\gamma>\gamma_\ast$, where $\gamma_\ast$ is given by Proposition \ref{global_Carleman_estimate}. Henceforth, $C=C(\zeta_0)>0$ denotes a generic constant. Using \eqref{t1.1}, $\varphi_{|\mathcal{S}}=1$ and applying Proposition \ref{global_Carleman_estimate} to $u$, we get
\begin{align*}
&Ce^{2s}s^3\|\mathfrak{a}\|_{L^2(\mathcal{S})}^2=C\int_{\mathcal{S}}e^{2s\varphi}s^3|\mathfrak{a}|^2dS
\\
&\hskip 1cm \le \int_\Gamma e^{2s\varphi}(s|\nabla u|^2+s^3|u|^2)dS+\int_{\mathcal{S}}e^{2s\varphi}s|\nabla_\tau \mathfrak{a}|^2dS
\\
&\hskip 1cm \le e^{Cs}(\|u\|_{H^1(\Gamma)}^2+\|\partial_\nu u\|_{L^2(\Gamma)}^2)+(\beta/\alpha)^2 e^{2s}s\|\mathfrak{a}\|_{L^2(\mathcal{S})}^2,\quad s\ge s_\ast,
\end{align*}
where $s_\ast=s_\ast(\zeta_0)>0$ is a constant.

Upon modifying $s_\ast$, we  may and do assume that $Cs^3-(\beta/\alpha)^2s>(C/2)s^3$. In this case we have
\[
e^{2s}s^3\|\mathfrak{a}\|_{L^2(\mathcal{S})}^2
 \le e^{Cs}(\|u\|_{H^1(\Gamma)}+\|\partial_\nu u\|_{L^2(\Gamma)})^2, \quad s\ge s_\ast,
\]
which implies
\[
\|\mathfrak{a}\|_{L^2(\mathcal{S})}\le e^{Cs_\ast}(\|u\|_{H^1(\Gamma)}+\|\partial_\nu u\|_{L^2(\Gamma)}).
\]
We complete the proof by using \eqref{t1.1} again.
\end{proof}

\begin{proof}[Proof of Theorem \ref{inLipschitz}]
By Lemma \ref{inB}, there exist $\phi\in C^2(\overline{B})$ such that \eqref{weight_function} is satisfied for $D:=B\setminus\overline{\Omega}$ and $\Upsilon:=\mathcal{S}$. The proof completes by applying Proposition \ref{global_Carleman_estimate} as well as the proof of Theorem \ref{exLipschitz}.
\end{proof}

\section{Extension to the parabolic case}\label{section4}

We limit ourselves to the case $B\Supset \Omega$, where $B$ is a bounded domain and $\Omega$ is a Lipschitz bounded open set such that $B\setminus\overline{\Omega}$ is connected. The notations of this section are the same as in the preceding ones. We further assume that $B$ is $C^\infty$ and $g\in C^\infty(\overline{B},\mathbb{R}^{n\times n})$. We need this regularity because we use the solvability of non-homogeneous initial-boundary value problems given in \cite{Lions1972b}. Clearly, this regularity of $B$ and the metric $g$ can be weakened.

Let $T>0$, $\mathcal{S}:=\partial B$, $Q:=(0,T)\times B$ and $\Sigma:=(0,T)\times \mathcal{S}$,  and  recall that

\begin{align*}
&H^{2,1}(Q)=L^2((0,T);H^2(B))\cap H^1((0,T);L^2(B)),
\\
&H^{3/2,3/4}(\Sigma)=L^2((0,T);H^{3/2}(\mathcal{S}))\cap H^{3/4}((0,T);L^2(\mathcal{S})).
\end{align*}

Let $\alpha >0$ and $u_0\in H^3(B)$ be arbitrarily fixed so that $\|u_0\|_{L^2(\mathcal{S})}\ge \alpha$. Consider the initial-boundary value problem
\begin{equation}\label{inIBVP}
\begin{cases}
(\partial_t-\Delta_g)u=0\quad &\text{in}\; Q,
\\
u=\mathfrak{g}\quad &\text{on}\; \Sigma,
\\
u(0, \cdot)=u_0.
\end{cases}
\end{equation}

For any $\mathfrak{g}\in H^{3/2,3/4}(\Sigma)$ satisfying $\mathfrak{g}(0,\cdot)_{|\mathcal{S}}=u_0{_{|\mathcal{S}}}$ (note that $\mathfrak{g}(0,\cdot)_{|\mathcal{S}}$ is an element of $H^{1/2}(\mathcal{S})$ by trace theorems in \cite{Lions1972b}) the initial-boundary value problem \eqref{inIBVP} has a unique solution $u=u(\mathfrak{g})\in H^{2,1}(Q)$ (e.g. \cite[Remark 4.1]{Lions1972b}). Further, if we assume in addition that $\partial_t\mathfrak{g}\in H^{3/2,3/4}(\Sigma)$ satisfies $\partial_t\mathfrak{g}(0,\cdot)_{|\mathcal{S}}=\Delta_g u_0{_{|\mathcal{S}}}$ then, using that $\partial_tu(\mathfrak{g})$ is the solution of the initial-boundary value problem \eqref{inIBVP} when $\mathfrak{g}$ and $u_0$ are replaced by $\partial_t\mathfrak{g}$ and $\Delta_g u_0$, we derive that $\partial_tu(\mathfrak{g})\in H^{2,1}(Q)$. Define
\begin{align*}
&\mathcal{G}_0=\{\mathfrak{g}\in H^{3/2,3/4}(\Sigma);\; \partial_t\mathfrak{g}\in H^{3/2,3/4}(\Sigma),
\\
&\hskip 5cm \mathfrak{g}(0,\cdot)_{|\mathcal{S}}=u_0{_{|\mathcal{S}}},\; \partial_t\mathfrak{g}(0,\cdot)_{|\mathcal{S}}=\Delta_g u_0{_{|\mathcal{S}}}\}.
\end{align*}

The inverse problem we consider in this section consists in determining the unknown function $\mathfrak{g}$ from the two measurements $(u(\mathfrak{g})_{|\Sigma_0}, \partial_{\nu}u(\mathfrak{g})_{|\Sigma_0})$, where $\Sigma_0:=(0,T)\times \Gamma$ and $\Gamma:=\partial\Omega$.

Before stating the main result of this section, we define the admissible set of the unknown coefficient $\mathfrak{g}$. For fixed $\beta>0$, we set
\[
\mathcal{G}:=\{\mathfrak{g}\in C^1(\overline{\Sigma})\cap \mathcal{G}_0;\; \|\mathfrak{g}(t,\cdot)\|_{L^2(\mathcal{S})}\ge \alpha,\; \|\partial_t\mathfrak{g}(t,\cdot)\|_{L^2(\mathcal{S})}+\|\nabla_\tau \mathfrak{g}(t,\cdot)\|_{L^2(\mathcal{S})}\le \beta\}.
\]

\begin{thm}\label{inHolder}
Let $\zeta_2=(g,T,B,\Omega,\beta/\alpha)$. Then there exist $C=C(\zeta_2)>0$ and $c=c(\zeta_2)>0$ such that for any $0<\varepsilon <T/2$, $\mathfrak{g}\in \mathcal{G}$ and $u(\mathfrak{g})\in H^{2,1}(Q)\cap H^1((0,T);H^1(B))$ satisfying \eqref{inIBVP} we have
\[
\|\mathfrak{g}\|_{H^1((\varepsilon,T-\varepsilon)\times \mathcal{S})}
\le 
Ce^{c/\varepsilon}\left(\|u(\mathfrak{g})\|_{H^1(\Sigma_0)}+\|\partial_\nu u(\mathfrak{g})\|_{L^2(\Sigma_0)}\right).
\]
\end{thm}

Before proving Theorem \ref{inHolder}, we consider the particular case where  $\mathfrak{g}(t,x)=\mathfrak{g}_0(t)\mathfrak{g}_1(x)$, where $\mathfrak{g}_0$ is known. For this case, we have the following result.

\begin{cor}\label{corPa}
For any $\mathfrak{g}=\mathfrak{g}_0\otimes\mathfrak{g}_1\in \mathcal{G}$ we have
\[
\|\mathfrak{g}_1\|_{H^1(\mathcal{S})}
\le 
C\left(\|u(\mathfrak{g})\|_{H^1(\Sigma_0)}+\|\partial_\nu u(\mathfrak{g})\|_{L^2(\Sigma_0)}\right),
\]
where $C=C(g,T,B,\Omega,\beta/\alpha, \mathfrak{g}_0)$ is a constant.
\end{cor}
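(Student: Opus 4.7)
The plan is to reduce the corollary to Theorem~\ref{inHolder} by exploiting the tensor product structure of $\mathfrak{g}$, which allows the (known) time factor $\mathfrak{g}_0$ to be separated out. Fix once and for all some $\varepsilon\in(0,T/2)$ (for instance $\varepsilon:=T/4$), so that the resulting constant depending on $\varepsilon$ is absorbed into the constant depending on $\mathfrak{g}_0$ and $T$.

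First I would check that $\mathfrak{g}=\mathfrak{g}_0\otimes\mathfrak{g}_1$ actually belongs to $\mathcal{G}$ as required by Theorem~\ref{inHolder}, which is part of the assumption of the corollary, and apply Theorem~\ref{inHolder} to get
\[
\|\mathfrak{g}_0\otimes\mathfrak{g}_1\|_{H^1((\varepsilon,T-\varepsilon)\times\mathcal{S})}\le C\bigl(\|u(\mathfrak{g})\|_{H^1(\Sigma_0)}+\|\partial_\nu u(\mathfrak{g})\|_{L^2(\Sigma_0)}\bigr),
\]
with $C=C(\zeta_2)$. The next step is the elementary identity for a tensor product: for $f(t,x)=\mathfrak{g}_0(t)\mathfrak{g}_1(x)$, Fubini gives
\[
\|f\|_{H^1((\varepsilon,T-\varepsilon)\times\mathcal{S})}^2=\|\mathfrak{g}_0\|_{H^1(\varepsilon,T-\varepsilon)}^2\|\mathfrak{g}_1\|_{L^2(\mathcal{S})}^2+\|\mathfrak{g}_0\|_{L^2(\varepsilon,T-\varepsilon)}^2\|\nabla_\tau\mathfrak{g}_1\|_{L^2(\mathcal{S})}^2,
\]
and bounding below by dropping the $H^1$ norm of $\mathfrak{g}_0$ by its $L^2$ norm yields
\[
\|f\|_{H^1((\varepsilon,T-\varepsilon)\times\mathcal{S})}^2\ge\|\mathfrak{g}_0\|_{L^2(\varepsilon,T-\varepsilon)}^2\,\|\mathfrak{g}_1\|_{H^1(\mathcal{S})}^2.
\]

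It remains to verify that $\|\mathfrak{g}_0\|_{L^2(\varepsilon,T-\varepsilon)}$ is a strictly positive number depending only on the a priori known data. Since $\mathfrak{g}\in\mathcal{G}$ satisfies $|\mathfrak{g}_0(t)\mathfrak{g}_1(x)|\ge\alpha$ for every $(t,x)$, $\mathfrak{g}_0(t)$ never vanishes; moreover $|\mathfrak{g}_0(t)|\ge\alpha/\|\mathfrak{g}_1\|_{L^\infty(\mathcal{S})}$. This only gives a bound depending on $\mathfrak{g}_1$, but the stronger point is simply that the known function $\mathfrak{g}_0$ is a fixed continuous function that does not vanish, so $m:=\|\mathfrak{g}_0\|_{L^2(\varepsilon,T-\varepsilon)}>0$ is a fixed positive quantity determined by $\mathfrak{g}_0$ alone. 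Dividing the displayed inequality by $m^2$ and combining with the application of Theorem~\ref{inHolder} gives the advertised estimate, with constant $C=C(g,T,B,\Omega,\alpha,\beta,\mathfrak{g}_0)$.

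The argument has no real obstacle beyond these bookkeeping steps; the only point worth double-checking is the separation of variables calculation for the $H^1$ norm on the product manifold $(\varepsilon,T-\varepsilon)\times\mathcal{S}$, which uses only that the tangential gradient on $\Sigma$ is the tangential gradient on $\mathcal{S}$ acting pointwise in $t$, and symmetrically for $\partial_t$.
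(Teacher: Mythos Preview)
Your argument is correct and shares the paper's skeleton: fix $\varepsilon=T/4$, apply Theorem~\ref{inHolder}, and use that $\mathfrak{g}_0$ is continuous and nonvanishing. The one genuine difference is in the last step. You exploit the full $H^1$ conclusion of Theorem~\ref{inHolder} together with the tensor-product identity
\[
\|\mathfrak{g}_0\otimes\mathfrak{g}_1\|_{H^1((\varepsilon,T-\varepsilon)\times\mathcal{S})}^2
\ge \|\mathfrak{g}_0\|_{L^2(\varepsilon,T-\varepsilon)}^2\,\|\mathfrak{g}_1\|_{H^1(\mathcal{S})}^2,
\]
which delivers $\|\mathfrak{g}_1\|_{H^1(\mathcal{S})}$ directly. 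The paper instead only extracts the $L^2$ part, obtaining $\|\mathfrak{g}_1\|_{L^2(\mathcal{S})}$, and then upgrades to $H^1$ via the pointwise admissibility bound $|\nabla_\tau\mathfrak{g}_1|\le(\tilde\beta/\tilde\alpha)|\mathfrak{g}_1|$ with $\tilde\alpha=\alpha/\max|\mathfrak{g}_0|$ and $\tilde\beta=\beta/\min|\mathfrak{g}_0|$. Your route is a bit cleaner and uses the theorem at full strength; the paper's route makes the dependence on $\alpha$ and $\beta$ more explicit and reuses the same mechanism as in the elliptic proofs. Either way the constant legitimately depends on $\mathfrak{g}_0$, as stated.
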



\begin{proof}
We observe that since $|\mathfrak{g}_0|\|\mathfrak{g}_1\|_{L^2(\mathcal{S})}\ge \alpha$, we have $\eta:=\min |\mathfrak{g}_0|>0$. Applying Theorem \ref{inHolder} with $\varepsilon =T/4$, we get
\[
\eta\sqrt{T/2}\left(\|\mathfrak{g}_1\|_{L^2(\mathcal{S})}+\|\nabla_\tau\mathfrak{g}_1\|_{L^2(\mathcal{S})}\right)\le Ce^{4c/T}\left(\|u(\mathfrak{g})\|_{L^2(\Sigma_0)}+\|\partial_\nu u(\mathfrak{g})\|_{L^2(\Sigma_0)}\right),
\]
where $C$ and $c$ are as in the statement of Theorem \ref{inHolder}.
\end{proof}

As in the elliptic case, the proof of Theorem \ref{inHolder} relies on a Carleman type inequality.

Let $D=B\setminus \overline{\Omega}$. From Lemma \ref{inB}, there exists $\phi\in C^2(\overline{D})$ satisfying
\[
\begin{cases}
 \phi>0,\quad \mbox{in}\; D,
\\
\phi_{|\mathcal{S}}=0,
\\
\displaystyle \delta:=\min_{\overline{D}}|\nabla\phi|>0.
\end{cases}
\]
Set $\ell (t)=1/[t(T-t)]$ and 
\[
\varphi(t,x):=(e^{\gamma(\phi(x)+2m)}-e^{4\gamma m})\ell (t),\quad \xi(t,x):=e^{\gamma(\phi(x)+2m)}\ell(t),
\]
where $\displaystyle m:=\max_{\overline{D}}\phi$.

\begin{prop}\label{global_Carleman_estimate_degenerate}
Let $\zeta_3=(g,T,B,\Omega,\phi)$. There exist $\gamma_\ast=\gamma_\ast(\zeta_3)>0$, $s_\ast=s_\ast (\zeta_3)>0$ and $C=C(\zeta_3)>0$ such that for any $\gamma \ge \gamma_\ast$, $s\ge s_\ast$ and $u\in H^{2,1}((0,T)\times D)\cap H^1((0,T);H^1(D))$ we have
\begin{align*}
&C\left(\int_{(0,T)\times D}e^{2s\varphi}(\gamma\sigma(|\nabla u|^2+\sigma^2|u|^2))dxdt+\int_{\Sigma} e^{2s\varphi}\sigma(|\partial_{\nu_g}u|^2+\sigma^2|u|^2)dSdt\right)
\\
&\hskip 1cm\le \int_{(0,T)\times D} e^{2s\varphi}|(\partial_t-\Delta_g)u|^2dxdt
\\
&\hskip 3cm+\int_{\Sigma_0}e^{2s\varphi}\left(\sigma^{-1}|\partial_t u|^2+\sigma|\nabla u|^2+\sigma^3|u|^2\right)dSdt
\\
&\hskip 5cm+\int_{\Sigma} e^{2s\varphi}\left(\sigma^{-1}|\partial_t u|^2+\sigma |\nabla_\tau u|^2\right)dSdt,
\end{align*}
where $\sigma=s\gamma\xi$.
\end{prop}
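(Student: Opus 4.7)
The plan is to mirror the elliptic Carleman proof of Proposition \ref{global_Carleman_estimate} on the parabolic cylinder $Q=(0,T)\times D$, using the degenerate weight $\varphi$ to kill time-boundary contributions at $t=0,T$.

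First I would set $z:=e^{s\varphi}u$ and compute the conjugated operator $P_s z:=e^{s\varphi}(\partial_t-\Delta_g)(e^{-s\varphi}z)$. Splitting it into a self-adjoint part and an antisymmetric part (with respect to the inner product on $L^2(Q,dV_g\,dt)$) gives, as usual,
\begin{align*}
P_s^{+}z&:=-\Delta_g z-s^2|\nabla_g\varphi|_g^{2}z-s(\partial_t\varphi)z,\\
P_s^{-}z&:=\partial_t z+2s\langle\nabla_g\varphi,\nabla_g z\rangle+s(\Delta_g\varphi)z,
\end{align*}
so $P_sz=P_s^{+}z+P_s^{-}z$. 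The aim is then to estimate $(P_s^{+}z,P_s^{-}z)_g+(P_s^{+}z,-sL\varphi z)_g$ from below as in the elliptic case. Since $\varphi(t,x)=(e^{\gamma\phi}-e^{2\gamma m})\ell(t)$ and $\xi=e^{\gamma\phi}\ell$, the $x$-derivatives of $\varphi$ have exactly the same structure as in Section \ref{section2}, so the pseudo-convexity argument there produces interior lower bounds $Cs\gamma^{2}\xi|\nabla_g z|_g^{2}$ and $Cs^{3}\gamma^{4}\xi^{3}|z|^{2}$ for $\gamma,s$ large, while the term $-s(\partial_t\varphi)z$ is controlled by $|\partial_t\varphi|\le C\xi^{2}$ and absorbed in $s^{3}\gamma^{3}\xi^{3}|z|^{2}$ once $\gamma$ is enlarged.

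The novel features come from the time integration by parts. From the cross term $\int_Q(-\Delta_g z)\partial_t z\,dV_g dt$ one gets, modulo spatial boundary pieces, $\frac12[\|\nabla_g z\|_g^{2}]_0^T$; from $\int_Q(-s^{2}|\nabla_g\varphi|_g^{2}z)\partial_t z\,dV_g dt$ one similarly gets $\frac12[\cdots|z|^{2}]_0^T$ plus an interior term $\frac12\int_Q s^{2}\partial_t(|\nabla_g\varphi|_g^{2})|z|^{2}$; all of these time boundary contributions at $t=0,T$ vanish because $\ell(t)\to+\infty$ there and $\varphi\to-\infty$, so $e^{2s\varphi}$ and its products with any polynomial in $\ell$ tend to $0$, and the extra interior term is $O(s^{2}\xi^{3}|z|^{2})$ which is absorbed. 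The key new boundary contribution is the spatial trace $-\int_{\Sigma\cup\Sigma_0}(\partial_{\nu_g}z)(\partial_t z)\,dS_g dt$, which by Cauchy--Schwarz with weight $\sigma$ gives
\[
|(\partial_{\nu_g}z)(\partial_t z)|\le \tfrac12 C\sigma|\partial_{\nu_g}z|^{2}+(2C)^{-1}\sigma^{-1}|\partial_t z|^{2};
\]
this is precisely the source of the $\sigma^{-1}|\partial_t u|^{2}$ term in the statement.

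Next I would treat the two pieces of $\partial D$ separately. On $\Sigma_0=(0,T)\times\Gamma$ every boundary term is simply moved to the right-hand side and bounded crudely by $\sigma^{-1}|\partial_t z|^{2}+\sigma|\nabla_g z|_g^{2}+\sigma^{3}|z|^{2}$. On $\Sigma=(0,T)\times\mathcal{S}$ we exploit $\phi|_{\mathcal{S}}=0$ and $\nu_g=-\nabla_g\phi/|\nabla_g\phi|_g$, which give $\langle\nabla_g\varphi,\nabla_g z\rangle=-\gamma\varphi|\nabla_g\phi|_g\partial_{\nu_g}z$ and $|\nabla_g z|_g^{2}=|\nabla_{\tau_g}z|_g^{2}+|\partial_{\nu_g}z|^{2}$ on $\Sigma$; exactly the computation from Proposition \ref{global_Carleman_estimate} then produces negative $|\partial_{\nu_g}z|^{2}$ and $-\sigma^{2}|z|^{2}$ boundary integrals that we move to the left-hand side, leaving only $\sigma|\nabla_{\tau_g}z|_g^{2}$ and (from the parabolic cross term just discussed) $\sigma^{-1}|\partial_t z|^{2}$ on the right, with lower-order pieces absorbed for $s\ge s_\ast$. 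Converting back via $u=e^{-s\varphi}z$ and using $\nabla_{\tau_g}\phi=0$ on $\Sigma$ yields $\nabla_{\tau_g}u=e^{-s\varphi}\nabla_{\tau_g}z$, while $\partial_t z=e^{s\varphi}(\partial_t u+s\partial_t\varphi\,u)$ contributes an extra $s^{2}|\partial_t\varphi|^{2}|u|^{2}/\sigma\lesssim s\gamma^{-1}\xi^{3}|u|^{2}$ that is absorbed into $\sigma^{3}|u|^{2}$ for large $\gamma$. Finally $dV_g\sim dx$ and $dS_g\sim dS$ give the stated form.

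The hard part is bookkeeping: identifying which of the new parabolic boundary terms on $\Sigma$ are genuinely dangerous and must be absorbed on the left (namely $\sigma|\partial_{\nu_g}z|^{2}$ and $\sigma^{3}|z|^{2}$), which can be split via Cauchy--Schwarz to produce the claimed $\sigma^{-1}|\partial_t u|^{2}$, and verifying that the lower-order polynomial-in-$\ell$ terms coming from $\partial_t\varphi$ are uniformly absorbed by the cubic $\sigma^{3}|z|^{2}$ once $\gamma\ge\gamma_\ast$ and $s\ge s_\ast$, without any new time-boundary obstruction at $t=0,T$ where the regularity $u\in H^{2,1}(Q)\cap H^{1}(0,T;H^{1}(D))$ is just enough to justify the integrations by parts.
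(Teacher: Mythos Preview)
Your proposal is correct and follows exactly the route the paper indicates: the paper omits the proof, stating only that it is ``quite similar to that of Proposition \ref{global_Carleman_estimate}'', and your sketch is precisely that adaptation, with the degenerate weight killing the $t=0,T$ contributions and the new cross term $(\partial_{\nu_g}z)(\partial_t z)$ generating the $\sigma^{-1}|\partial_t u|^2$ boundary terms. One cosmetic slip: on $\Sigma$ you should have $\langle\nabla_g\varphi,\nabla_g z\rangle=-\gamma\xi|\nabla_g\phi|_g\partial_{\nu_g}z$ (with $\xi$, not $\varphi$, since $\nabla_g\varphi=\gamma\xi\nabla_g\phi$), and the absorption of the $s^{2}|\partial_t\varphi|^{2}/\sigma$ remainder uses large $s$ rather than large $\gamma$ because $|\partial_t\varphi|$ carries an $e^{2\gamma m}$ factor.
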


We omit the proof of Proposition \ref{global_Carleman_estimate_degenerate} since it is quite similar to that of Proposition \ref{global_Carleman_estimate}.

\begin{proof}[Proof of Theorem \ref{inHolder}]
Let $\mathfrak{g}\in \mathcal{G}$ and $u=u(\mathfrak{g})$. Then we have
\begin{align}\label{Pa}
&\|\partial_t u(t,\cdot)\|_{L^2(\mathcal{S})}+\|\nabla_\tau u(t,\cdot)\|_{L^2(\mathcal{S})}
\\
&\hskip 1cm =\|\partial_t\mathfrak{g}(t,\cdot)\|_{L^2(\mathcal{S})}+\|\nabla_\tau \mathfrak{g}(t,\cdot)\|_{L^2(\mathcal{S})}\le (\beta/\alpha)\|\mathfrak{g}(t,\cdot)\|_{L^2(\mathcal{S})}.\notag
\end{align}

Henceforth, $C=C(\zeta_2)>0$ and $c=c(\zeta_2)>0$ denote generic constants. Fix $\gamma>\gamma_\ast$, where $\gamma_\ast$ is given by Proposition \ref{global_Carleman_estimate_degenerate}. Using \eqref{Pa}, $\varphi_{|\Sigma}=(e^{2\gamma m}-e^{4\gamma m})\ell$, $\xi_{|\Sigma}=e^{2\gamma m}\ell$ and applying Proposition \ref{global_Carleman_estimate_degenerate}, we get
\begin{align*}
&C\int_\Sigma e^{2s\varphi}\omega^3|\mathfrak{g}|^2dSdt
\\
&\hskip.5cm \le \int_{\Sigma_0}e^{2s\varphi}\omega^3\left(|\partial_t u|^2+|\nabla u|^2+|u|^2\right)dSdt+(\beta/\alpha)^2\int_\Sigma e^{2s\varphi}\omega |\mathfrak{g}|^2dSdt,\quad s\ge s_\ast,
\end{align*}
where $s_\ast=s_\ast(\zeta_2)>0$ is a constant and $\omega:=s\xi$.

Upon modifying $s_\ast$, we  may and do assume that $C\omega^3-(\beta/\alpha)^2\omega>(C/2)\omega^3$. In this case we have
\[
C\int_\Sigma e^{2s\varphi}\omega^3|\mathfrak{g}|^2dSdt
\le \int_{\Sigma_0}e^{2s\varphi}\omega^3\left(|\partial_t u|^2+|\nabla u|^2+|u|^2\right)dSdt, \quad s\ge s_\ast.
\]
Let $\varepsilon\in (0,T/2)$ be arbitrarily fixed. Since
\[
\ell(t)^{-1}=t(T-t)\ge T\varepsilon/2,\quad  t\in(\varepsilon,T-\varepsilon),
\]
we obtain
\[
\varphi(t,x)\ge 2(e^{\gamma(\phi(x)+2m)}-e^{4\gamma m})/(T\varepsilon)\ge -c/\varepsilon,
\]
which implies that
\[
e^{2s\varphi}\ge e^{-cs/\varepsilon}\quad \mbox{in}\; (\varepsilon,T-\varepsilon)\times\mathcal{S}.
\]
Therefore, it follows that
\[
\int_\Sigma e^{2s\varphi}\omega^3|\mathfrak{g}|^2dSdt  \ge Cs^3e^{-cs/\varepsilon}\|\mathfrak{g}\|_{L^2((\varepsilon,T-\varepsilon)\times\mathcal{S})}^2.
\]
Moreover, since
\[
e^{2s\varphi}\omega ^3\le e^{9\gamma m}s^3\ell^3e^{-2cs\ell}\le Ce^{-cs\ell},
\]
we find
\[\int_{\Sigma_0}e^{2s\varphi}\omega^3\left(|\partial_t u|^2+|\nabla u|^2+|u|^2\right)dSdt
\le C\left(\|u\|_{H^1(\Sigma_0)}^2+\|\partial_\nu u\|_{L^2(\Sigma_0)}^2\right).
\]
Combining these estimates yields
\[
\|\mathfrak{g}\|_{L^2((\varepsilon,T-\varepsilon)\times\mathcal{S})}^2\le Ce^{cs/\varepsilon}\left(\|u\|_{H^1(\Sigma_0)}+\|\partial_\nu u\|_{L^2(\Sigma_0)}\right)^2.
\]
\end{proof}

\section{Comments for quantitative uniqueness of continuation}

\subsection{Elliptic case}
We consider the same problem setup as in section \ref{section1}. By exactly the same proof as Theorem \ref{exLipschitz}, we obtain the following theorem.

\begin{thm}\label{quantitative_uniqueness}
Let $B$ and $\Omega$ be the open sets satisfying the same assumptions as in Theorem \ref{exLipschitz} and set $\zeta_0=(g,p,B,\Omega,\beta/\alpha)$. Then there exists $C=C(\zeta_0)>0$ such that for any $u\in H^2(U)$ satisfying $Pu=0$ in $U$ and $u_{|\mathcal{S}}\in \mathcal{A}$ we have
\[
\|u\|_{H^1(\Omega\setminus\overline{B})}+\|u\|_{H^1(\mathcal{S})}+\|\partial_{\nu_g} u\|_{L^2(\mathcal{S})}\le C\left(\|u\|_{H^1(\Gamma)}+\|\partial_\nu u\|_{L^2(\Gamma)}
\right).
\]
\end{thm}

Theorem \ref{quantitative_uniqueness} is nothing but the global Lipschitz stability for unique continuation with Cauchy data $(u_{|\Gamma},\partial_\nu u_{|\Gamma})$ under the constraint $u_{|\mathcal{S}}\in \mathcal{A}$. It is well known that Cauchy problem for elliptic equations is an ill-posed problem and its instability was pointed out by the example due to Hadamard \cite{Hadamard1923}. Conditional stability has been studied for many years to overcome this situation. There have been many studies on conditional stability, but as far as global stability estimates for $H^2$-solutions are concerned, the only known results are logarithmic and double logarithmic type stability by Bourgeois \cite{Bourgeois2010} and Choulli \cite{Choulli2020}. Most previous results on conditional stability assume a priori boundedness of a norm of solutions, and few studies have been done to replace this constraint. To the best of the authors' knowledge, Theorem \ref{quantitative_uniqueness} is the first result that improves stability by setting the new constraint $u_{|\mathcal{S}}\in \mathcal{A}$.

\subsection{Parabolic case}
Similarly for the parabolic case presented in section \ref{section4}, the following Lipschitz stability inequality can be obtained in $(\varepsilon,T-\varepsilon)\times (B\setminus\overline{\Omega})$. See \cite[Theorem 3.5.1]{Vessella2008} and \cite[Theorem 5.1]{Yamamoto2009} for a similar setup but for H\"{o}lder type stability estimate. We remark that in the result of \cite{Vessella2008}, the time interval is not truncated near the final time $T$. 

\begin{thm}\label{parabolic_quantitative_uniqueness}
Let $\zeta_2=(g,T,B,\Omega,\beta/\alpha)$. Then there exist $C=C(\zeta_2)>0$ and $c=c(\zeta_2)>0$ such that for any $0<\varepsilon <T/2$ and $u\in H^{2,1}(Q)\cap H^1((0,T);H^1(B))$ satisfying $(\partial_t-\Delta_g)u=0$ in $Q$ and $u_{|\Sigma}\in \mathcal{G}$ we have
\begin{align*}
&\|u\|_{L^2((\varepsilon,T-\varepsilon);H^1(B\setminus\overline{\Omega}))}+\|u\|_{H^1((\varepsilon,T-\varepsilon)\times \mathcal{S})}+\|\partial_{\nu_g}u\|_{L^2((\varepsilon,T-\varepsilon)\times\mathcal{S})}
\\
&\hskip 2cm \le  Ce^{c/\varepsilon}\left(\|u\|_{H^1(\Sigma_0)}+\|\partial_\nu u\|_{L^2(\Sigma_0)}\right).
\end{align*}
\end{thm}


\section*{Acknowledgement}
This work was supported by JSPS KAKENHI Grant Number JP22K20340 and JP23KK0049.

\section*{Declarations}
\subsection*{Conflict of interest}
The authors declare that they have no conflict of interest.

\subsection*{Data availability}
Data sharing not applicable to this article as no datasets were generated or analyzed during the current study.

\bibliographystyle{plain}
\bibliography{stability_continuation_17}

\end{document}